\newtheorem{Theorem}{Theorem}
\newtheorem{Proposition}[Theorem]{Proposition}
\newtheorem{Corollary}[Theorem]{Corollary}
\newtheorem{Lemma}[Theorem]{Lemma}
\newcommand{\kk}{{\mathbb K}}
\newcommand{\LL}{{\mathbb L}}
\begin{document}

\title{Coherence of relatively quasi-free algebras}

\author{Alexey Bondal\thanks{ Steklov Institute of Mathematics, Moscow, and Kavli Institute for the Physics and Mathematics of the Universe (WPI), The University of Tokyo, Kashiwa, Chiba 277-8583, Japan,  and HSE Laboratory of algebraic geometry, Moscow, and The Institute for Fundamental Science, Moscow, and Bogolyubov Laboratory of Theoretical Physics, JINR, Dubna}~ and Ilya Zhdanovskiy\thanks{MIPT, Moscow, and HSE Laboratory of algebraic geometry, Moscow}}

\date{}

\maketitle

\section{Introduction}

An important problem in the representation theory of associative rings is to find conditions which guarantee that a noncommutative ring is coherent. Left coherence implies that the category of finitely presented left modules over the ring is abelian. This category might then be considered as being analogous to the category of coherent sheaves on an affine commutative variety. Thus, coherence is the initial point for developing noncommutative geometry in the framework of representation theory of rings in the style of the theory of coherent sheaves on algebraic varieties.

Despite its crucial importance, the problem of coherence is basically {\it terra incognita}: there are not so many classes of noncommutative rings with nice homological properties for which we can ensure coherence so far. Some recent references to results on coherence of various types of algebras the reader can find in the last paragraph of this paper. 

A class of algebras for which coherence is known is so-called quasi-free algebras over fields in the sense of Kuntz and Quillen \cite{CQ1}. We reproduce the proof of coherence for them in the main body of the text.

In this paper, we are interested in algebras that are relatively quasi-free over a commutative ring, say  $\kk$. By definition, this is an algebra $A$ over $\kk$ with the $A$-bimodule of noncommutative 1-forms $\Omega^1_{A/\kk }$ being projective.
Similar to the case of algebras that are quasi-free over a field one can show that relatively quasi-free algebras are exactly those
which satisfy the lifting property for nilpotent $\kk$-central extensions (proposition \ref{qfcriterion}).

We prove a theorem
that an algebra which is quasi-free relatively over a commutative {\em noetherian} ring $\kk$ is left and right coherent.

This result requires a different and more involved techniques in comparison to the case when the ring $\kk$ is a field. Our proof is based on the Chase criterion for coherence, which states that a ring is left coherent if and only if the product of any number of flat {\em right} modules is flat. In fact, the criterion is enough to check against the product of sufficiently many copies of rank one free modules over the algebra. In course of the proof, we use also a criterion of similar type for noetherian algebras.

It would be interesting to check whether the method of this paper is applicable to proving coherence for other important  classes of algebras. In \cite{BZ}, we explore algebras $B$, so-called {\em homotopes} constructed from an algebra $A$ and an element $\Delta\in A$. Algebra $B$ is defined by modifying multiplication in the vector space $A$ via the rule:
$$
a\cdot_{B}b=a\Delta b
$$
and adjoining a unit to the resulting algebra.
If element $\Delta$ verifies certain conditions, in which case it is dubbed well-tempered in \cite{BZ}, we prove that if the original algebra was coherent then so is the new one, thus providing an iterative way of constructing coherent algebras.


We are grateful to Dmitry Piontkovsky for useful references. This work was done during authors visit to Kavli IPMU and was supported by World Premier International Research Center Initiative (WPI Initiative), MEXT, Japan.

\section{Coherence of algebras and categories of finitely presented modules}


All rings and algebras which we consider in this paper are unital.

A left module $M$ over a ring is said to be {\em coherent} if it is finitely generated and for every morphism $\varphi : P\to M$ with free module $P$ of finite rank the kernel of $\varphi$ is finitely generated. A ring is {\em (left) coherent} if it is coherent as a left module over itself. Equivalently, a ring is coherent if every homomorphism between finitely generated projective modules over the ring has finitely generated kernel. If ring is coherent, then finitely presented modules are the same as coherent modules and the category of finitely presented modules is abelian \cite{Bou}.

Let $A$ be an algebra over a commutative ring $\kk$.
The definition due to Cuntz and Quillen \cite{CQ1} of a quasi-free algebra, which we adopt to the relative case (i.e. for algebras over a commutative ring $\kk$ rather than over a field), is that algebra $A$ over a field $k$ is {\em quasi-free} if the bimodule of noncommutative differential 1-forms
$\Omega^1_{A/k}$ (the definition is below)
is a projective $A\otimes_kA^{opp}$-module.

Every quasi-free algebra is {\em hereditary}, i.e. has global dimension $\le 1$. Indeed, for any two left $A$-modules $M$ and $N$ we have:
$$
{\rm Ext}^i_A(M, N)={\rm Ext}^i_{A-A}(A, {\rm Hom}_{k}(M, N)).
$$
The defining sequence (\ref{reldiff}) for the noncommutative differentials is a projective $A$-bimodule resolution of length 2 for $A$. It follows that ${\rm Ext}^i_A(M, N)=0$, for $i\ge 2$.

\begin{Lemma}\label{hereditary}
Hereditary rings are coherent.
\end{Lemma}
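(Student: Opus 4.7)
The plan is to prove coherence directly from the definition given in the excerpt: it suffices to show that for every morphism $\varphi : P \to R$ from a finite rank free left $R$-module $P$ to $R$ (viewed as a left module over itself), the kernel of $\varphi$ is finitely generated.

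First, I would consider the image $I = \mathrm{im}(\varphi)$, which is a finitely generated left ideal of $R$, so in particular a submodule of the projective module $R$. Since $R$ is hereditary, i.e.\ has left global dimension $\le 1$, every submodule of a projective module is projective; in particular $I$ is projective. Therefore the short exact sequence
$$
0 \to \ker \varphi \to P \to I \to 0
$$
splits, which gives an isomorphism $P \cong \ker \varphi \oplus I$.

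From this it follows that $\ker \varphi$ is a direct summand of the finitely generated module $P$, hence is itself finitely generated. This is exactly the condition required for $R$ to be left coherent as a left module over itself. The argument for right coherence is symmetric, provided one interprets hereditary as two-sided (which is the case here since global dimension is defined via $\mathrm{Ext}$ and the bimodule resolution of length $2$ applies on both sides).

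There is no real obstacle: the proof is a one-line consequence of the splitting of exact sequences terminating in a projective module, combined with the fact that images of maps between finitely generated projectives are finitely generated. The only point to keep in mind is that hereditary does not a priori mean that every submodule of a finitely generated projective is finitely generated — what saves us is precisely that the relevant submodule $\ker \varphi$ appears as a direct summand of $P$ rather than as an abstract submodule.
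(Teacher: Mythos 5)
Your proof is correct and follows essentially the same route as the paper: the image of $\varphi$ is a submodule of a projective module, hence projective by hereditariness, so the sequence $0\to\ker\varphi\to P\to I\to 0$ splits and $\ker\varphi$ is a quotient (indeed a direct summand) of the finitely generated module $P$. The only cosmetic difference is that you work with the target $R$ itself while the paper states the argument for a morphism between two finitely generated projectives, which changes nothing.
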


\begin{proof}
Indeed, a submodule of a projective module is projective for such rings. Thus, given a morphism $\varphi :P_1\to P_2$ between finitely generated projective modules, the image $I$ of $\varphi$ is a submodule of a projective module, hence projective too. Then short exact sequence induced by $\varphi$:
$$
0\to K\to P_1\to I\to 0,
$$
where $K$ is the kernel of $\varphi$, splits. Hence, we have an epimorphism $P_1\to K$, which proves that $K$ is finitely generated.
\end{proof}

This implies that every quasi-free algebra is left (and, similarly, right) coherent.

It follows from the proof that the number of generators for the kernel $K$ as a left $A$-module is bounded by the number of generators for $P_1$, which shows how special is this case. In general, even noetherian commutative rings might be of arbitrary global dimension and the number of generators of the kernel of a morphism between two finite rank free modules is not restricted by any function on ranks of the two free modules. For that reason this proof of coherence does not seem to be applicable to the case of relatively quasi-free algebras that we consider below. We will need a substantially different argument.


Let us develop a bit of the theory for the case of relatively quasi-free algebras. We consider an algebra $A$ over a commutative ring $\kk$.
An $A$-{\em bimodule} is said to be $\kk $-{\em central} if left and right $\kk $ actions coincide. Such $A$-bimodules are identified with left $A\otimes_{\kk} A^{opp}$-modules.

Define the $A\otimes_{\kk} A^{opp}$-module of relative noncommutative 1-forms by a short exact sequence
\begin{equation}\label{reldiff}
0\to \Omega^1_{A/\kk}\to A\otimes_{\kk}A\to A\to 0
\end{equation}

{\bf Definition.} Let $\kk$ be a commutative ring. An algebra $A$ over $\kk$ is said to be quasi-free over $\kk$ if $\Omega^1_{A/\kk}$ is a projective $A\otimes_{\kk} A^{opp}$-module.

Let $A$ be a relatively quasifree algebra over $\kk$.
By applying functor ${\rm Ext}^i_{A\otimes_{\kk} A^{opp}}(-, M)$ to the exact sequence (\ref{reldiff}) for an arbitrary $\kk $-central $A$-bimodule $M$, we obtain:
\begin{equation}\label{ext2=0}
{\rm Ext}^2_{A\otimes_{\kk} A^{opp}}(A, M)={\rm Ext}^1_{A\otimes_{\kk} A^{opp}}(\Omega^1_{A/\kk}, M)=0.
\end{equation}

A generalization of Cuntz-Quillen criterion  for quasi-free algebras \cite{CQ1} holds in the relative case too.
\begin{Proposition}
\label{qfcriterion}
Algebra $A$ is quasi-free over $\kk$ if and only if for any ${\tilde R}$, a nilpotent extension of an algebra $R$ by a square zero ideal   in the category of algebras over $\kk$, and any homomorphism $A\to R$ there exists its lifting to a homomorphism $A\to {\tilde R}$.
\end{Proposition}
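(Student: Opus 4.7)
My plan is to reduce both implications to the standard identification of $\mathrm{Ext}^2_{A \otimes_\kk A^{opp}}(A,M)$ with the Hochschild cohomology $HH^2(A/\kk, M)$ classifying square-zero $\kk$-algebra extensions of $A$ by the $\kk$-central bimodule $M$. Since $A \otimes_\kk A$ is free of rank one as an $A \otimes_\kk A^{opp}$-module (generated by $1 \otimes 1$), the long exact sequence obtained from (\ref{reldiff}) gives an isomorphism $\mathrm{Ext}^1_{A \otimes_\kk A^{opp}}(\Omega^1_{A/\kk}, M) \cong \mathrm{Ext}^2_{A \otimes_\kk A^{opp}}(A, M)$ for every $\kk$-central bimodule $M$. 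Thus projectivity of $\Omega^1_{A/\kk}$ in the category of $\kk$-central bimodules is equivalent to the vanishing of $HH^2(A/\kk, M)$ for all such $M$, i.e.\ to the statement that every square-zero $\kk$-algebra extension of $A$ by a $\kk$-central bimodule splits.

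For the ``only if'' direction, given a $\kk$-algebra map $\phi : A \to R$ and a square-zero extension $\pi : \tilde R \to R$ with kernel $I$, I would form the pullback
\begin{equation*}
0 \to I \to \tilde R \times_R A \to A \to 0 .
\end{equation*}
Since $I^2 = 0$ in $\tilde R$, the ideal $I$ is a $\kk$-central $R$-bimodule, and via $\phi$ a $\kk$-central $A$-bimodule. The pullback is therefore a square-zero $\kk$-algebra extension of $A$, and by projectivity of $\Omega^1_{A/\kk}$ (and the equivalence just described) it splits; composing a splitting with the projection to $\tilde R$ produces the desired lift of $\phi$.

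For the converse, I would apply the lifting property in the special case $R = A$, $\phi = \mathrm{id}_A$ to an arbitrary square-zero $\kk$-algebra extension $\tilde R \twoheadrightarrow A$ with kernel an arbitrary $\kk$-central $A$-bimodule $M$. The resulting lift $A \to \tilde R$ is a $\kk$-algebra section, so the extension splits. Since this argument applies to every such extension, $HH^2(A/\kk, M)$ vanishes for every $\kk$-central bimodule $M$, and $\Omega^1_{A/\kk}$ is projective by the initial equivalence.

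The main subtlety I expect is not in the pullback construction or the extraction of a splitting, but in making precise, in the relative setting over $\kk$, the identification of elements of $HH^2(A/\kk, M)$ with equivalence classes of square-zero $\kk$-algebra extensions of $A$ by $M$. One has to check that $\kk$-centrality of the bimodule corresponds exactly to the extension living in the category of $\kk$-algebras (rather than merely of rings); once this is verified, the classical Hochschild argument transfers verbatim, precisely because $A \otimes_\kk A$ is free as an $A \otimes_\kk A^{opp}$-module and (\ref{reldiff}) is therefore the beginning of the standard bar resolution of $A$ over $\kk$.
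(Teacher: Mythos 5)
Your argument is correct and follows essentially the same route as the paper: the fibred product $\tilde R\times_R A$ as a square-zero extension of $A$ by the pulled-back $\kk$-central bimodule $I$, its splitting via the vanishing of $\mathrm{Ext}^2_{A\otimes_\kk A^{opp}}(A,I)\cong\mathrm{Ext}^1_{A\otimes_\kk A^{opp}}(\Omega^1_{A/\kk},I)$ coming from the sequence (\ref{reldiff}), and the converse by taking $R=A$ with an arbitrary $\kk$-central bimodule as kernel. The only difference is that you make explicit the identification with Hochschild cohomology and flag the $\kk$-centrality issue, which the paper treats as understood.
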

\begin{proof} Let $A$ be a quasi-free algebra. Consider a square-zero nilpotent extension ${\tilde R}\to R$ and a homomorphism $A\to R$. Denote by $I$ the kernel of ${\tilde R}\to R$. By assumptions, it is a square zero ideal in ${\tilde R}$, hence it has a natural structure of $\kk$-central $R$-bimodule. Let ${\tilde A}$ be the fibred product over $R$ of $A$ and  ${\tilde R}$. This is a $\kk$-algebra, which is a square zero extension of $A$ by $I$, where $I$ is endowed with a $\kk $-central $A$-bimodule structure which is the pull-back of $R$-bimodule structure. Such extensions are classified by ${\rm Ext}^2_{A\otimes_{\kk} A^{opp}}(A, I)$. This group is trivial for quasi-free algebras by (\ref{ext2=0}). Hence, we have a splitting homomorphism  $A\to {\tilde A}$. When combined with the map ${\tilde A} \to {\tilde R}$ it gives the required lifting.

Conversely, if all square zero extensions allow liftings, then, by taking $R=A$ and $I$ arbitrary $\kk $-central $A$-bimodule, we see that ${\rm Ext}^2_{A\otimes_{\kk} A^{opp}}(A, I)={\rm Ext}^1_{A\otimes_{\kk} A^{opp}}(\Omega^1_{A/\kk}, I)=0$, i.e. $\Omega^1_{A/\kk}$ is a projective $A\otimes_{\kk} A^{opp}$-bimodule.

\end{proof}

\section{Coherence for relatively quasi-free algebras}

Recall the following criterion of coherence due to Chase \cite{Chase}.
\begin{Lemma}\label{Chasecriterion} For any ring $A$ the following are equivalent:
\begin{itemize}
\item $A$ is left coherent,
\item For any family of {\em right} flat modules $F_i$, $i\in I$, the product $\prod_{i\in I}F_i$ is right flat,
\item For the family $F_i\cong A$ of free modules with ${\rm card}\ I= {\rm card}\ A$, the product $\prod_{i\in I}F_i$ is right flat.
\end{itemize}
\end{Lemma}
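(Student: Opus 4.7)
The plan is to establish $(1)\Rightarrow(2)\Rightarrow(3)\Rightarrow(1)$, the middle implication being automatic.

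For $(1)\Rightarrow(2)$, I would invoke the usual flatness test: a right $A$-module $M$ is flat if and only if $M\otimes_A J\to M$ is injective for every finitely generated left ideal $J$. Under left coherence such a $J$ is in fact finitely presented, and for any finitely presented left module $N$ the canonical comparison map $\bigl(\prod_{i\in I} F_i\bigr)\otimes_A N\to\prod_{i\in I}(F_i\otimes_A N)$ is an isomorphism: it is obvious for $N=A^q$, and the general case follows from the five-lemma applied to a finite presentation $A^p\to A^q\to N\to 0$. Taking $N=J$ identifies $\bigl(\prod F_i\bigr)\otimes_A J$ with $\prod(F_i\otimes_A J)$, which embeds in $\prod F_i$ since each $F_i$ is flat; hence $\prod F_i$ is flat.

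The content of the lemma is $(3)\Rightarrow(1)$. Fix a finitely generated left ideal $J=(a_1,\dots,a_n)$ and an exact sequence $0\to K\to A^n\xrightarrow{\varphi} J\to 0$; one must show $K$ is finitely generated. Pick a set $I$ with $|I|=|A|$ and tensor on the left over $A$ with $A^I$, which is flat by hypothesis. Flatness simultaneously delivers exactness of $0\to A^I\otimes_A K\to (A^I)^n\to A^I\otimes_A J\to 0$ and an injection $A^I\otimes_A J\hookrightarrow A^I\otimes_A A=A^I$. The composite $(A^I)^n\to A^I$ is, coordinate-by-coordinate in $x\in I$, the map $(f_1,\dots,f_n)\mapsto\bigl(\sum_{j=1}^n f_j(x)a_j\bigr)_{x\in I}$, so its kernel, computed pointwise inside $(A^n)^I=(A^I)^n$, is literally $K^I$. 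By the exactness and the injection above, this same kernel equals the image of $A^I\otimes_A K$. Consequently the natural map $A^I\otimes_A K\to K^I$, $f\otimes k\mapsto (f(x)k)_{x\in I}$, is an isomorphism.

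To conclude, note that $K\subset A^n$ satisfies $|K|\le |A|$, so any generating set $\{k_\lambda\}$ of $K$ can be indexed by a subset of $I$; extending by zero produces a tautological element $\kappa=(k_x)_{x\in I}\in K^I$. By the surjectivity just established, $\kappa=\sum_{j=1}^m f_j\otimes k'_j$ for \emph{finitely} many $k'_j\in K$, and evaluating coordinatewise gives $k_x=\sum_j f_j(x)k'_j\in Ak'_1+\cdots+Ak'_m$ for every $x$. Hence $k'_1,\dots,k'_m$ generate $K$, $J$ is finitely presented, and $A$ is left coherent. The step I expect to be most delicate is the identification producing $A^I\otimes_A K\cong K^I$, together with the cardinality bookkeeping that turns the single choice $|I|=|A|$ in (3) into finite generation of $K$ via the tautological element.
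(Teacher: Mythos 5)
Your argument is correct: the reduction of flatness to finitely generated left ideals, the comparison map $(\prod F_i)\otimes_A N\to\prod(F_i\otimes_A N)$ for finitely presented $N$, and the identification $A^I\otimes_A K\twoheadrightarrow K^I$ followed by the tautological-element trick are exactly the classical proof of Chase's theorem. The paper itself gives no proof of this lemma (it simply cites \cite{Chase}), so there is nothing to diverge from; your write-up is a faithful and complete rendering of the standard argument (the only cosmetic point being that $|K|\le|A|^n$, which equals $|A|$ for infinite $A$, while the finite case is trivial).
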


We will use also a criterion in the same style for noetherianess (cf. \cite{Ab}).
\begin{Lemma}\label{Ncriterion} For any ring $A$ the following are equivalent:
\begin{itemize}
\item $A$ is left noetherian,
\item For any left $A$-module $M$ and any family of flat {\em right} modules $F_i$, $i\in I$, the morphism $(\prod_{i\in I}F_i)\otimes_AM\to \prod_{i\in I}(F_i\otimes_AM)$ is mono,
\item For any left $A$-module $M$ and the family of rank 1 free {\em right} modules $F_i\cong A$, $i\in I$, ${\rm card}\ I= {\rm card}\ A$, the morphism $(\prod_{i\in I}F_i)\otimes_AM\to \prod_{i\in I}(F_i\otimes_AM)$ is mono.
\end{itemize}
\end{Lemma}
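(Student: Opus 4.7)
My plan is to establish the cyclic chain (1)$\Rightarrow$(2)$\Rightarrow$(3)$\Rightarrow$(1); the middle implication is immediate by specialization, so the real work lies in the two others.

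For (1)$\Rightarrow$(2) I rely on the preliminary fact that whenever $N$ is a finitely presented left $A$-module, the canonical map $(\prod_i F_i)\otimes_A N\to\prod_i(F_i\otimes_A N)$ is an isomorphism for arbitrary right modules $F_i$; this follows from the five-lemma applied to a finite presentation $A^m\to A^n\to N\to 0$, together with the observation that $\prod_i$ preserves right-exact sequences of abelian groups. Since $A$ is left noetherian, every finitely generated submodule of $M$ is finitely presented. Given $x\in(\prod F_i)\otimes_A M$ whose image in $\prod_i(F_i\otimes_A M)$ vanishes, I would write $x$ as a finite sum $\sum_k f_k\otimes m_k$, set $M'=\sum_k A m_k\subseteq M$, and lift $x$ to some $x'\in(\prod F_i)\otimes_A M'\cong\prod_i(F_i\otimes_A M')$. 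Flatness of the $F_i$ turns the inclusion $M'\hookrightarrow M$ into injections $F_i\otimes_A M'\hookrightarrow F_i\otimes_A M$, and products of injections of abelian groups are injections, so $\prod_i(F_i\otimes_A M')\hookrightarrow\prod_i(F_i\otimes_A M)$. A chase of the resulting commutative square forces $x'=0$, hence $x=0$.

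For (3)$\Rightarrow$(1), which I expect to be the main obstacle, I argue by contraposition. If $A$ is not left noetherian then $A$ is infinite, so $|I|=|A|\geq\aleph_0$; pick a non-finitely-generated left ideal $K\subseteq A$ and set $M=A/K$. Applying $A^I\otimes_A(-)$ and $(-)^I$ to $0\to K\to A\to M\to 0$ produces a commutative diagram whose middle column is the identity on $A^I$. Identifying $A^I\otimes_A M$ with the cokernel of $A^I\otimes_A K\to A^I$ and $M^I$ with $A^I/K^I$, the kernel of the comparison $A^I\otimes_A M\to M^I$ becomes $K^I/\mathrm{image}(A^I\otimes_A K\to A^I)$. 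The essential point is that every element of $A^I\otimes_A K$ is a finite sum $\sum_j (a_j^{(i)})_i\otimes k_j$ whose image in $A^I$ is $(\sum_j a_j^{(i)} k_j)_i$, so all its coordinates lie in the single finitely generated submodule $\sum_j A k_j\subseteq K$. Hence the image of $A^I\otimes_A K\to A^I$ is exactly $\bigcup_\alpha K_\alpha^I$, with $\alpha$ ranging over finitely generated submodules of $K$.

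It remains to exhibit an element of $K^I$ outside this union. Since $K$ is not finitely generated, one inductively picks $k_n\in K\setminus(Ak_1+\cdots+Ak_{n-1})$; by construction $\sum_n A k_n$ is not finitely generated. Placing the sequence $(k_n)$ along a countable subset of $I$ with zeros elsewhere yields an element $(\kappa_i)\in K^I$ such that $\sum_i A\kappa_i$ is not finitely generated, so $(\kappa_i)$ cannot sit in any $K_\alpha^I$. This produces a nonzero element in the kernel of $A^I\otimes_A M\to M^I$, contradicting (3). The technical subtlety to keep in mind is matching the cardinality hypothesis $|I|=|A|$ with the countable induction used here; this is automatic since $A$ must be infinite when non-noetherian, but it explains why condition (3) needs $|I|$ to be at least countably infinite rather than being allowed to be finite.
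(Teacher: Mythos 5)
The paper itself offers no proof of this lemma (it is quoted from Aberg's paper \cite{Ab}), so your argument has to stand on its own. The implications (1)$\Rightarrow$(2) and (2)$\Rightarrow$(3) are fine: the reduction to a finitely generated, hence finitely presented, submodule $M'\subseteq M$, the isomorphism $(\prod F_i)\otimes_A M'\cong\prod(F_i\otimes_A M')$ for finitely presented $M'$, and the injectivity of $\prod(F_i\otimes_A M')\to\prod(F_i\otimes_A M)$ coming from flatness together do give injectivity. Likewise your identification of the image of $A^I\otimes_A K\to A^I$ with $\bigcup_\alpha K_\alpha^I$, the union over finitely generated submodules $K_\alpha\subseteq K$, is correct and is the right way to set up (3)$\Rightarrow$(1).

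The gap is in the last step of (3)$\Rightarrow$(1). From the fact that $\sum_i A\kappa_i$ is not finitely generated you conclude that $(\kappa_i)$ lies in no $K_\alpha^I$; but over a non-noetherian ring a non-finitely-generated module can perfectly well be \emph{contained} in a finitely generated one, so this inference fails. Concretely, take $A=k[x_1,x_2,\dots]$ and $K=(x_1,x_2,\dots)$, and choose $k_n=x_1x_n$. Then $k_n\in K\setminus(Ak_1+\cdots+Ak_{n-1})$ for every $n$, so this is a legitimate outcome of your inductive construction, and $\sum_n Ak_n=x_1K$ is indeed not finitely generated; yet all $k_n$ lie in the single principal submodule $K_\alpha=Ax_1\subseteq K$, so your element $(\kappa_i)$ lies in $K_\alpha^I$, is in the image of $A^I\otimes_A K\to A^I$, and gives the zero class in the kernel. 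The repair is precisely what the hypothesis ${\rm card}\, I={\rm card}\, A$ is for: since $|K|\le |A|=|I|$, one can let $(\kappa_i)_{i\in I}$ run through \emph{all} elements of $K$; then $(\kappa_i)\in K_\alpha^I$ would force $K\subseteq K_\alpha$, i.e.\ $K$ finitely generated, a contradiction. So your closing remark --- that the cardinality hypothesis only serves to guarantee $|I|\ge\aleph_0$ --- misdiagnoses its role; it is needed to spread the whole of $K$ (or at least a generating set of every non-finitely-generated left ideal) over the index set, exactly as in Chase's proof of Lemma \ref{Chasecriterion}.
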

\begin{Theorem}\label{thquasifree}
Let $\kk$ be a commutative noetherian ring. Assume that $A$ is an algebra quasi-free relatively over $\kk$ and $A$ is flat as a $\kk$-module. Then $A$ is a left and right coherent algebra.
\end{Theorem}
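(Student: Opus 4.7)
My plan is to invoke Chase's criterion (Lemma~\ref{Chasecriterion}) to reduce left coherence of $A$ to flatness of the right $A$-module $P := \prod_{i\in I} A$ for $|I|=|A|$; right coherence will follow by the symmetric argument. The strategy is to transfer the coherence of $\kk$ (which holds because $\kk$ is noetherian, hence coherent) up to $A$ through the bimodule resolution provided by relative quasi-freeness.

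First I would observe that $P$ is flat as a $\kk$-module: each factor $A$ is $\kk$-flat and $\kk$ is coherent, so Chase's criterion applied at the level of $\kk$ yields the $\kk$-flatness of $P$. A standard change-of-rings calculation then gives, for every $\kk$-module $N$ and every $i \geq 1$,
\begin{equation*}
\mathrm{Tor}^A_i(P, A \otimes_\kk N) = \mathrm{Tor}^\kk_i(P, N) = 0;
\end{equation*}
indeed, given a $\kk$-flat resolution $F_\bullet \to N$, the complex $A \otimes_\kk F_\bullet$ is a flat left $A$-resolution of $A \otimes_\kk N$ (by $\kk$-flatness of $A$), and $P \otimes_A (A \otimes_\kk F_\bullet) \cong P \otimes_\kk F_\bullet$ computes the Tor over $\kk$. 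Tensoring the bimodule resolution (\ref{reldiff}) over $A$ with an arbitrary left $A$-module $M$ produces the short exact sequence
\begin{equation*}
0 \to \Omega^1_{A/\kk} \otimes_A M \to A \otimes_\kk M \to M \to 0
\end{equation*}
of left $A$-modules. Since $\Omega^1_{A/\kk}$ is a summand of some free bimodule $(A \otimes_\kk A)^{(J)}$, the left $A$-module $\Omega^1_{A/\kk} \otimes_A M$ is a summand of $A \otimes_\kk M^{(J)}$; combining this with the displayed Tor-vanishing collapses the long $\mathrm{Tor}^A(P,-)$-sequence to
\begin{equation*}
0 \to \mathrm{Tor}^A_1(P, M) \to P \otimes_A (\Omega^1_{A/\kk} \otimes_A M) \xrightarrow{f} P \otimes_\kk M,
\end{equation*}
so it remains to show that $f$ is injective.

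For the final step I invoke Lemma~\ref{Ncriterion} applied to the noetherian ring $\kk$ with the flat family $F_i = A$: the canonical map $P \otimes_\kk N \to \prod_I (A \otimes_\kk N)$ is injective for every $\kk$-module $N$. Taking $N = M^{(J)}$ and passing to the left-$A$-summand $\Omega^1_{A/\kk} \otimes_A M \subset A \otimes_\kk M^{(J)}$, the functoriality of the canonical comparison $P \otimes_A (-) \to \prod_I (-)$ on left $A$-modules and its compatibility with direct sums yield injectivity of the map $\alpha \colon P \otimes_A (\Omega^1_{A/\kk} \otimes_A M) \to \prod_I (\Omega^1_{A/\kk} \otimes_A M)$. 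Letting $\beta \colon P \otimes_\kk M \to \prod_I (A \otimes_\kk M)$ be the analogous canonical map (injective by the same lemma) and $g$ the product of the inclusions $\Omega^1_{A/\kk} \otimes_A M \hookrightarrow A \otimes_\kk M$, the commutative square
\begin{equation*}
\begin{array}{ccc}
P \otimes_A (\Omega^1_{A/\kk} \otimes_A M) & \xrightarrow{f} & P \otimes_\kk M \\
\alpha \downarrow & & \downarrow \beta \\
\prod_I (\Omega^1_{A/\kk} \otimes_A M) & \xrightarrow{g} & \prod_I (A \otimes_\kk M)
\end{array}
\end{equation*}
has $\beta \circ f = g \circ \alpha$ injective (as both $g$ and $\alpha$ are), and therefore $f$ is injective, giving $\mathrm{Tor}^A_1(P, M) = 0$. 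Hence $P$ is right flat, $A$ is left coherent by Chase, and right coherence is the mirror statement. The key obstacle, as I see it, is the compatibility check underlying the diagram: confirming that the canonical comparison $P \otimes_A (-) \to \prod_I (-)$ respects the splitting of $A \otimes_\kk M^{(J)}$ into the summand $\Omega^1_{A/\kk} \otimes_A M$ and its complement, so that the noetherian criterion for $\kk$ translates into the desired injectivity of $\alpha$.
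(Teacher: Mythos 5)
Your proposal is correct and follows essentially the same route as the paper: reduce to flatness of $\prod_I A$ via Chase's criterion, apply the sequence (\ref{reldiff}) together with projectivity of $\Omega^1_{A/\kk}$ to reduce the vanishing of $\mathrm{Tor}_1^A(\prod_I A, M)$ to injectivity of a product-comparison map, and establish that injectivity from Lemma \ref{Ncriterion} applied to the noetherian ring $\kk$ plus the direct-summand embedding of $\Omega^1_{A/\kk}$ into a free bimodule. Your only departures are cosmetic: you tensor (\ref{reldiff}) with $M$ over $A$ first and use an explicit change-of-rings in place of the paper's spectral sequence identifying $\mathrm{Tor}_1^{A\otimes_\kk A^{opp}}(A, M\otimes_\kk N)$ with $\mathrm{Tor}_1^A(N,M)$.
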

\begin{proof}
Consider left $A$-module $M$ and a family of rank 1 free right $A$-modules $F_i\cong A$. We shall consider all left (respectively, right) $A$-modules to be always endowed with the right (respectively, left) $\kk$-module structure identical to the left (respectively, right) $\kk$-module structure.

We have a natural transformation of functors on the category of $A$-bimodules with values in the category of $\kk$-modules:
$$
(-)\otimes _{A\otimes_{\kk} A^{opp}}(M\otimes_{\kk}\prod_i F_i)\to \prod_i((-)\otimes _{A\otimes_{\kk} A^{opp}}(M\otimes_{\kk} F_i))
$$
Let us apply functors $(-)\otimes _{A\otimes_{\kk} A^{opp}}(M\otimes_{\kk}\prod_i F_i)$ and $\prod_i((-)\otimes _{A\otimes_{\kk} A^{opp}}(M\otimes_{\kk} F_i))$ and their derived functors to the exact sequence (\ref{reldiff}). The above natural transformation provides us with a commutative diagram with exact rows:
\begin{equation}\label{tordiagram}
\xymatrix{0 \ar[r] & {\rm Tor}^{A\otimes_{\mathbb K}A^{opp}}_1(A, M\otimes_{\mathbb K} \prod_i F_i)\ar[r]\ar[d] & \Omega^1_{A/{\mathbb K}} \otimes_{A \otimes_{\mathbb K}A^{opp}}(M \otimes_{\mathbb K} \prod_i F_i)\ar[d]
\\
0\ar[r] & \prod_i {\rm Tor}^{A \otimes_{\mathbb K} A^{opp}}_1(A, M\otimes_{\mathbb K}F_i)\ar[r] & \prod_i (\Omega^1_{A/{\mathbb K}} \otimes_{A \otimes_{\mathbb K} A^{opp}} (M \otimes_{\mathbb K} F_i))}
\end{equation}
For any left $A$-module $M$ and right $A$-module $N$, we have an isomorphism of objects in the derived categories:
$$
A\otimes_{A\otimes_{\kk} A^{opp}}^{\LL}(M\otimes_{\kk}^{\LL}N)=N\otimes^{\LL}_AM,
$$
which implies a spectral sequence:
$$
{\rm Tor}_i^{A\otimes_{\kk} A^{opp}}(A, {\rm Tor}_j^{\kk}(M,N))\Longrightarrow {\rm Tor}^A_{i+j}(N, M)
$$
For a flat $\kk$-module $N$, it implies that
\begin{equation}\label{tortor}
{\rm Tor}_1^{A\otimes_{\kk} A^{opp}}(A, M\otimes_{\kk}N)={\rm Tor}^A_{1}(N, M).
\end{equation}
Since $F_i$ are rank 1 free as $A$-modules and $A$ is a flat $\kk$-module, $F_i$ are also flat $\kk$-modules. Hence, ${\rm Tor}_1^{A\otimes_{\kk} A^{opp}}(A, M\otimes_{\kk}F_i)={\rm Tor}_1^A(F_i, M)=0$.

Since $\kk$ is noetherian, it is also coherent, hence by Chase criterion, lemma  \ref{Chasecriterion},
$\prod_i F_i$ is also $\kk$-flat. In view of (\ref{tortor}), diagram (\ref{tordiagram}) reads:
\begin{equation}\label{tordiagram1}
\xymatrix{0 \ar[r] & {\rm Tor}^{A}_1(\prod_i F_i, M)\ar[r]\ar[d] & \Omega^1_{A/{\mathbb K}} \otimes_{A \otimes_{\mathbb K}A^{opp}}(M \otimes_{\mathbb K} \prod_i F_i)\ar[d]
\\
 & 0\ar[r] & \prod_i (\Omega^1_{A/{\mathbb K}} \otimes_{A \otimes_{\mathbb K} A^{opp}} (M \otimes_{\mathbb K} F_i))}
\end{equation}
Let us show that $\Omega^1_{A/\kk}\otimes_{A\otimes_{\kk} A^{opp}}(M\otimes_{\kk}\prod_i F_i)\to \prod_i (\Omega^1_{A/\kk}\otimes_{A\otimes A^{opp}}(M\otimes_{\kk} F_i))$ is an embedding.

First, consider the case when $D=\oplus_j(A\otimes_{\kk}A^{opp})$ is a free $A\otimes_{\kk}A^{opp}$-module. Then
$$
D\otimes_{A\otimes_{\kk}A^{opp}}(M\otimes_{\kk}\prod_iF_i)=\oplus_j(M\otimes_{\kk}\prod_iF_i)
$$
and
$$\prod_i(D\otimes_{A\otimes_{\kk}A^{opp}}(M\otimes_{\kk}F_i))=\prod_i(\oplus_j(M\otimes_{\kk}F_i)).
$$
The morphism
$$
D\otimes_{A\otimes_{\kk}A^{opp}}(M\otimes_{\kk}\prod_iF_i)\to \prod_i(D\otimes_{A\otimes_{\kk}A^{opp}}(M\otimes_{\kk}F_i))
$$
is the composite of two morphism:
$$
\oplus_j(M\otimes_{\kk}\prod_iF_i)\to \oplus_j\prod_i(M\otimes_{\kk}F_i)\to \prod_i(\oplus_j(M\otimes_{\kk}F_i)).
$$
By criterion of noetherianess, lemma  \ref{Ncriterion}, for $\kk$, we have that $M\otimes_{\kk}\prod_iF_i\to \prod_i(M\otimes _{\kk}F_i)$ is an embedding. Therefore, the first morphism is an embedding. The second morphism is readily an embedding, as the morphism $\oplus_j\prod_i\to \prod_i\oplus_j$ is so. Hence the composite is an embedding.

Since $\Omega^1_{A/\kk}$ is a projective bimodule, we have an imbedding
$$
\Omega^1_{A/\kk}\to \oplus_jA\otimes_{\kk}A^{opp}
$$
as a direct summand. This implies a diagram:
\begin{equation}\label{omegaembedding}
\xymatrix{\Omega^1_{A/{\mathbb K}} \otimes_{A \otimes_{\mathbb K}A^{opp}}(M \otimes_{\mathbb K} \prod_i F_i) \ar[r]\ar[d] & \oplus_j(M \otimes_{\mathbb K}\prod_i F_i)\ar[d]\\
\prod_i (\Omega^1_{A/{\mathbb K}} \otimes_{A \otimes_{\mathbb K} A^{opp}} (M \otimes_{\mathbb K} F_i)) \ar[r] & \prod_i(\oplus_j (M \otimes_{\mathbb K} F_i))}
\end{equation}
The upper horizontal arrow is an embedding because it is obtained by taking tensor product of an embedding of a direct summand with  module $M\otimes_{\kk}\prod_i F_i$. We have shown above that the right vertical arrow is injective. Therefore, the
left vertical arrow is an embedding.

Then, diagram (\ref{tordiagram1}) implies that ${\rm Tor}_1^A(\prod_iF_i , M)=0$, i.e $\prod_iF_i$ is a right flat module. By Chase criterion, lemma \ref{Chasecriterion}, algebra $A$ is left coherent.

Right coherence follows similarly.

\end{proof}

Similarly, one can prove the following statement.

\begin{Theorem} Let algebra $A$ over a commutative ring $\kk$ be of Hochschild dimension $n$. Then Tor-dimension of any product of free right modules is less than $n$.
\end{Theorem}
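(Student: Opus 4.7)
The plan is to repeat the diagram argument of Theorem~\ref{thquasifree}, replacing the length-one bimodule resolution (\ref{reldiff}) by a projective $A\otimes_{\kk}A^{opp}$-resolution
$$0 \to P_n \to P_{n-1} \to \cdots \to P_0 \to A \to 0$$
of length $n$, which exists by the assumption on Hochschild dimension. I take the ambient hypotheses of the previous theorem ($\kk$ noetherian, $A$ flat over $\kk$) to remain in force, since without them neither would $\prod_j F_j$ be $\kk$-flat nor would a free $A$-module be $\kk$-flat. Under these hypotheses, the spectral sequence identity (\ref{tortor}) yields
$$\mathrm{Tor}^A_i\bigl(\prod\nolimits_j F_j,\, M\bigr) \;\cong\; \mathrm{Tor}^{A\otimes_\kk A^{opp}}_i\bigl(A,\, M\otimes_\kk\textstyle\prod_j F_j\bigr)$$
for every left $A$-module $M$, and analogously with a single $F_j$ in place of the product.

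For $i > n$ the right-hand side vanishes immediately from Hochschild dimension $n$. The substantive degree is $i = n$, where the right-hand side identifies with the kernel of $P_n\otimes_{A\otimes_\kk A^{opp}}(M\otimes_\kk\prod_j F_j) \to P_{n-1}\otimes_{A\otimes_\kk A^{opp}}(M\otimes_\kk \prod_j F_j)$. The key step is to apply the natural transformation
$$(-)\otimes_{A\otimes_\kk A^{opp}}(M\otimes_\kk\prod\nolimits_j F_j) \;\to\; \prod\nolimits_j\bigl((-)\otimes_{A\otimes_\kk A^{opp}}(M\otimes_\kk F_j)\bigr)$$
to the pair $P_n\to P_{n-1}$ and to show that both the induced left vertical arrow and the induced bottom horizontal arrow are injective. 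The bottom row is injective factor by factor: each $F_j$ is $A$-free, so $\mathrm{Tor}^A_n(F_j, M) = 0$ for $n\geq 1$, whence $P_n\otimes_{A\otimes_\kk A^{opp}}(M\otimes_\kk F_j)\hookrightarrow P_{n-1}\otimes_{A\otimes_\kk A^{opp}}(M\otimes_\kk F_j)$ for every $j$. For the left vertical, I would write $P_n$ as a direct summand of a free bimodule $\bigoplus_l(A\otimes_\kk A^{opp})$ and argue exactly as in Theorem~\ref{thquasifree}: on a free bimodule the natural transformation factors as $\bigoplus_l(M\otimes\prod_j F_j)\to\bigoplus_l\prod_j(M\otimes F_j)\to \prod_j\bigoplus_l(M\otimes F_j)$, where the first arrow is mono by the $\kk$-noetherianness criterion (Lemma~\ref{Ncriterion} applied to $\kk$) and the second is always mono; passage to a direct summand preserves injectivity. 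A diagram chase then forces the top row to be injective, i.e. $\mathrm{Tor}^A_n(\prod_j F_j, M) = 0$.

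The main obstacle is not technical but expositional: the statement silently carries the hypotheses of Theorem~\ref{thquasifree}. Granted those, the length-$n$ case reduces step for step to the length-one argument already in the text, with the only new ingredient being the dimension shift to the $n$-th spot of a longer bimodule resolution.
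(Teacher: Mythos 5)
Your proof is correct and follows essentially the same route as the paper's: apply the two functors to a projective bimodule resolution of length $n$, use the free-bimodule/direct-summand argument together with Lemma~\ref{Ncriterion} for $\kk$ to embed the first complex termwise into the product complex, and conclude ${\rm Tor}^A_n(\prod_j F_j, M)=0$ (the paper extracts this from the long exact sequence of the quotient complex rather than your direct chase at the top degree, which is immaterial). You are also right to flag that the statement tacitly retains the hypotheses of Theorem~\ref{thquasifree} ($\kk$ noetherian, $A$ flat over $\kk$): the paper's proof, which just says ``the same argument as in the previous theorem,'' uses them without restating them.
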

\begin{proof}
Consider a projective $A$-bimodule resolution for $A$ of length $n+1$. Let $M$ be a left $A$-module and $F_i$ free right $A$-modules. By applying functors $(-)\otimes _{A\otimes_{\kk} A^{opp}}(M\otimes_{\kk}\prod_i F_i)$ and $\prod_i((-)\otimes _{A\otimes_{\kk} A^{opp}}(M\otimes_{\kk} F_i))$ to the resolution we get two complexes. Moreover, the natural transformation between the functors provides a morphism from the first complex to the second one. The same argument as in the proof of the previous theorem shows that the second complex is exact and the first one is termwise embedded into the second one. Hence, we get an exact triangle of complexes, where the third term is the quotient of the second complex by the first one. It implies long exact sequence on cohomology of these three complexes, that shows that cohomology of the first complex in the most left term is trivial. This cohomology is identified with ${\rm Tor}_n^A(\prod_iF_i , M)$. This proves the theorem.
\end{proof}

\section{Examples and related results}

If $A$ is quasi-free over a field $k$ and $\kk$ is a commutative $k$-algebra, then $A\otimes_k \kk$ is flat and quasi-free over $\kk$. This gives numerous examples where theorem \ref{thquasifree} is applicable, if we take $\kk$ to be a noetherian ring. Among those are free algebras, path algebras of quivers with no relation, etc.

The criterion of coherence in terms of lifting the nilpotents, proposition \ref{qfcriterion}, implies that coherence is preserved under localizations of algebras, which shows, in particular, that group algebras of free groups over noetherian rings $\kk$ are coherent.

More examples come if we consider algebras which are Morita equivalent to examples, considered above, because coherence is preserved under Morita equivalence. Thus, we have a

\begin{Corollary}
The matrix algebra over the group algebra of a free group over a noetherian ring is coherent.
\end{Corollary}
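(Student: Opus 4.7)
The plan is to chain together three preservation properties for coherence that are already invoked in this section: base change/quasi-freeness gives coherence of the free algebra by Theorem \ref{thquasifree}; the nilpotent-lifting criterion of Proposition \ref{qfcriterion} is stable under Ore localization, so coherence passes to group algebras of free groups; finally, coherence is a Morita invariant, which upgrades the result to matrix algebras. To begin with the first step, fix a free group $F$ on a set $X$. The free associative algebra $\kk\langle X\rangle$ is quasi-free over $\kk$: by the universal property of derivations, $\Omega^1_{\kk\langle X\rangle/\kk}$ is a free bimodule on the symbols $\{dx : x\in X\}$, and it is also free, in particular flat, as a $\kk$-module. Hence Theorem \ref{thquasifree} applies and $\kk\langle X\rangle$ is coherent.

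For the second step, I would realize $\kk[F]$ as the universal Ore localization of $\kk\langle X\rangle$ obtained by formally inverting each generator $x\in X$, and then transport quasi-freeness through this localization via Proposition \ref{qfcriterion}. Given a square-zero $\kk$-algebra extension $\tilde R \to R$ with nilpotent kernel and a homomorphism $\varphi : \kk[F]\to R$, the composition $\kk\langle X\rangle \to \kk[F]\to R$ lifts to some $\tilde\psi : \kk\langle X\rangle \to \tilde R$ by quasi-freeness of $\kk\langle X\rangle$, and each $\tilde\psi(x)$ is automatically a unit in $\tilde R$ since $\varphi(x)$ is a unit in $R$ and any preimage of a unit modulo a nilpotent ideal is a unit. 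Therefore $\tilde\psi$ extends uniquely through the localization to a lift $\kk[F]\to \tilde R$. Proposition \ref{qfcriterion} then shows $\kk[F]$ is quasi-free over $\kk$, and since $\kk[F]$ is still $\kk$-flat (as it is a localization of a free $\kk$-module with respect to monomials), Theorem \ref{thquasifree} gives coherence of $\kk[F]$. This localization step is the main technical input; the rest is formal.

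Finally, for any $N\ge 1$ the matrix algebra $M_N(\kk[F])$ is Morita equivalent to $\kk[F]$, and the category of finitely presented left (or right) modules is preserved, up to equivalence, by Morita equivalence. Since coherence of a ring is precisely the condition that this category be abelian, coherence of $\kk[F]$ transfers to $M_N(\kk[F])$, which is the statement of the corollary.
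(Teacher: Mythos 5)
Your proposal is correct and follows essentially the same route as the paper, whose justification for this corollary is precisely the chain: the free algebra is quasi-free and $\kk$-flat so Theorem \ref{thquasifree} applies, quasi-freeness passes to the localization $\kk[F]$ via the nilpotent-lifting criterion of Proposition \ref{qfcriterion}, and coherence is Morita invariant. You merely fill in the details (the unit-lifting argument modulo a square-zero ideal and the flatness of $\kk[F]$) that the paper leaves implicit.
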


This example is of interest when considering perverse sheaves and harmonic analysis on graph (cf. \cite{BZ}).

There are few results on coherence of algebras. In \cite{CLL}, the authors proved that if $\kk$ is a noetherian algebra and $A$ and $B$ are augmented coherent $\kk$ algebras, then the coproduct $A*B$ over $\kk$ is coherent too. Aberg in \cite{Ab} was able to remove the assumption that algebras $A$ and $B$ are augmented by heavy use of Chase criterion.

It makes sense to compare Chase criterion, used in this paper, with another cohomological approach applicable to the case of local algebras.
There is a simple way to control coherence of local algebras (cf. \cite{Po}).
A finitely generated local algebra is coherent if, for every finitely generated ideal $J$, the space ${\rm Tor}^A_1(k, J)$ is finite dimensional over the residue field $k$. In particular, it allows to prove that if a graded algebra has a double-sided ideal $J$, such that the quotient algebra is right notherian, and $J$ is free as a left $A$-module, then $A$ is right coherent (\cite{Po}, \cite{P}). This criterion allows to prove coherence for some classes of algebras, see \cite{S}, \cite{H}, \cite{HOZ}.

An interesting example of coherence, which is beyond the scope of methods of the current paper, as well as the above criterion for local algebras, is the one due to D. Piontkovski \cite{P}. He showed coherence of the {\em geometric} ${\mathbb Z}$-algebra of the helix with the thread consisting of 3 elements $({\cal O}(-1), {\cal O}, {\cal O}(1))$ of an exceptional collection on ${\mathbb P}^3$ (see \cite{BP}). Note that any full exceptional collection on ${\mathbb P}^3$ has 4 elements. Coherence of the algebra implies the existence of a t-structure in the subcategory generated by the 3 elements of the exceptional collection, similar to the standard geometric t-structure in the derived category of coherent sheaves on ${\mathbb P}^3$. This subcategory is interesting in relation to the problem of description of mathematical instantons on ${\mathbb P}^3$, because it contains instanton vector bundles of arbitrary topological charge. In \cite{ELO}, this subcategory is related to a noncommutative Grassmannian.


Note that the condition on the commutative ring $\kk$ to be noetherian cannot be removed in the statement of the theorem \ref{thquasifree}. A counterexample due to Soublin is known, where $A$ is a polynomial algebra in one variable over a coherent commutative ring $\kk$ \cite{Sou}.
More precisely, if $\kk$ is the direct product of a countable number of copies of ${\mathbb Q}[[x, y]]$, the ring of formal power series in two variables, then the ring $\kk [t]$ is not coherent. Note that the ring $\kk$ in this case is even {\em uniformly coherent} and has no nilpotents.

For graded rings, there is a weaker notion of graded coherence, when the category of graded modules is considered. H. Minamoto showed in \cite{Mi} that, in contrast to the non-graded case, algebra $A[t]$ is graded coherent, if $A$ is coherent and degree of $t$ is 1.

\def\cprime{$'$}
\ifx\undefined\bysame
\newcommand{\bysame}{\leavevmode\hbox to3em{\hrulefill}\,}
\fi

\end{document}